\pdfoutput=1
\documentclass[12pt,reqno]{amsart}
\usepackage[margin=1in]{geometry}
\usepackage{amsmath}
\usepackage{amssymb}
\usepackage{graphicx}
\usepackage{booktabs}
\usepackage{array}
\usepackage{float}
\usepackage{caption}
\usepackage{enumitem}
\usepackage{amsthm}
\usepackage{hyperref}
\usepackage{longtable}
\usepackage{makecell}
\usepackage{fancyhdr}
\fancypagestyle{plain}{
  \fancyhf{}
  \fancyhead[L]{Dual Affine Spiral Orbits on $\mathbb{Z}^2$}
  \fancyhead[R]{C. Wrathall}
  \fancyfoot[C]{\thepage}
  
}
\title{Dual Affine Spiral Orbits on $\mathbb{Z}^2$ Generated by Paired Unit Squares}
\author{Chuck Wrathall \\
        Independent Researcher \\
        \href{https://orcid.org/0009-0004-7008-4489}{0009-0004-7008-4489}}
\theoremstyle{plain}
\newtheorem{theorem}{Theorem}[section]
\theoremstyle{remark}
\newtheorem*{remark}{Remark}
\begin{document}
\maketitle
\begin{abstract}
We introduce a simple iterative geometric construction on the integer lattice $\mathbb{Z}^2$ consisting of paired unit squares that share a single corner. At each step, a new square is constructed outwardly on the hypotenuse of the isosceles right triangle formed by the outer edges adjacent to the shared corner. This process generates two interlocking affine spiral orbits on $\mathbb{Z}^2$: a positive orbit $\{P_n\}$ starting at $(0, 0)$ and a negative orbit $\{N_n\}$ starting at $(-1, 2)$.
Both sequences satisfy linear recurrences driven by multiplication by the Gaussian integer $1 + i$. We show that the paired points remain symmetric with respect to the fixed midpoint $M = (-\frac{1}{2}, 1)$ for all $n$, satisfying the invariant $P_n + N_n = (-1, 2)$. Extending the iteration backward under the associated inverse maps produces an iterated function system whose attractor is similar to the Twindragon fractal, providing a concrete lattice-based viewpoint on its geometry. In addition, the paired points yield the normalized integer sequence
\[
a(n) = \frac{|P_n|^2 + |N_n|^2}{5} = 2^{n+1} + 1 - 2 \operatorname{Re}((1 + i)^n),
\]
which is always an integer and appears as A396151 in the OEIS.
\end{abstract}
\section{Introduction}
While experimenting with paired unit squares on graph paper, the author observed that repeatedly constructing new squares outwardly on the hypotenuse of the isosceles right triangle formed at each step produced two interlocking spiral sequences of lattice points. At every iteration, the two unit squares are positioned as point reflections of each other through their shared corner. These sequences remain on $\mathbb{Z}^2$ indefinitely, satisfy clean affine recurrences, and are related by a simple point-reflection symmetry through the non-lattice point $M = (-\frac{1}{2}, 1)$. When the iteration is extended backward ($n < 0$), the inverse maps turn out to be closely related to the two contractions that generate the famous Twindragon fractal. As $n \to -\infty$, $P_n \to N_0$ and $N_n \to P_0$: the two seed points swap roles as limit points, and the orbits spiral around $M$ with their distance to it approaching $\sqrt{5}/2$. Thus, an elementary geometric construction on the integer lattice leads naturally to an iterated function system whose attractor is similar to the classical Twindragon fractal. This note presents the construction in full detail, derives the affine recurrences and their closed forms in the complex plane, proves the main invariants (including the new integer sequence $a(n)$), and clarifies the connection to the Twindragon attractor.
\section{Geometric Construction}
\subsection{The Lattice}
The construction is built entirely on the integer lattice $\mathbb{Z}^2$. Any point in the lattice can be chosen as the initial corner $P_0 = (0, 0)$; this choice is a labeling convention enabled by the translational symmetry of the lattice.
\subsection{Base Iteration}
At the base iteration $n = 0$, the construction consists of two unit squares that share exactly one common corner, denoted $P_0 = (0,0)$. These two squares are positioned as point reflections of each other through the shared corner. This configuration establishes the initial paired-square structure from which all subsequent iterations are generated. The corner coordinates of each square at the base iteration are listed below, and correspond directly to the $n = 0$ entries of Appendix~A.
\begin{table}[H]
\centering
\begin{tabular}{llc}
\toprule
\textbf{Square} & \textbf{Corners} & \textbf{Shared Corner} \\
\midrule
$a$ & $(0,0),(-1,0),(0,1),(-1,1)$ & $P_0 = (0,0)$ \\
$b$ & $(0,0),(1,0),(0,-1),(1,-1)$ & $P_0 = (0,0)$ \\
$-a$ & $(-1,2),(0,2),(-1,1),(0,1)$ & $N_0 = (-1,2)$ \\
$-b$ & $(-1,2),(-2,2),(-1,3),(-2,3)$ & $N_0 = (-1,2)$ \\
\bottomrule
\end{tabular}
\caption{Corner coordinates of the base paired squares for both orbits.}
\end{table}
\subsection{Corner Sum Identity}
At each iteration $n \ge 0$, a pair of unit squares shares a common corner, denoted $P_n$ (positive orbit) or $N_n$ (negative orbit). This identity was the original observation that identified the shared corners as geometrically meaningful iteration points: the average of the eight corner coordinates of each pair of squares always returns the shared corner itself,
\[
P_n = \frac{1}{8} \sum_{i=1}^{8} (x_i, y_i), \qquad N_n = \frac{1}{8} \sum_{i=1}^{8} (x_i, y_i).
\]
In each case, because the two unit squares are point reflections of each other through their shared corner, the eight corner coordinates are symmetric with respect to that corner. Therefore the shared corner is exactly the centroid (average) of those eight points. This identity holds for every iteration of both orbits, and is verified explicitly for $n = 0$ through $8$ in Appendix~A.
\subsection{Hypotenuse Line Construction}
The two outer edges adjacent to the shared corner form the legs of an isosceles right triangle with the right angle at the shared corner. The hypotenuse construction at step $n$ is the line containing the hypotenuse of this triangle. A new square is constructed outwardly on this hypotenuse line, and its far corner becomes the next iteration point $P_{n+1}$ (positive orbit) or $N_{n+1}$ (negative orbit).
\begin{figure}[H]
\centering
\begin{minipage}{0.4\textwidth}
\centering
\includegraphics[width=\textwidth]{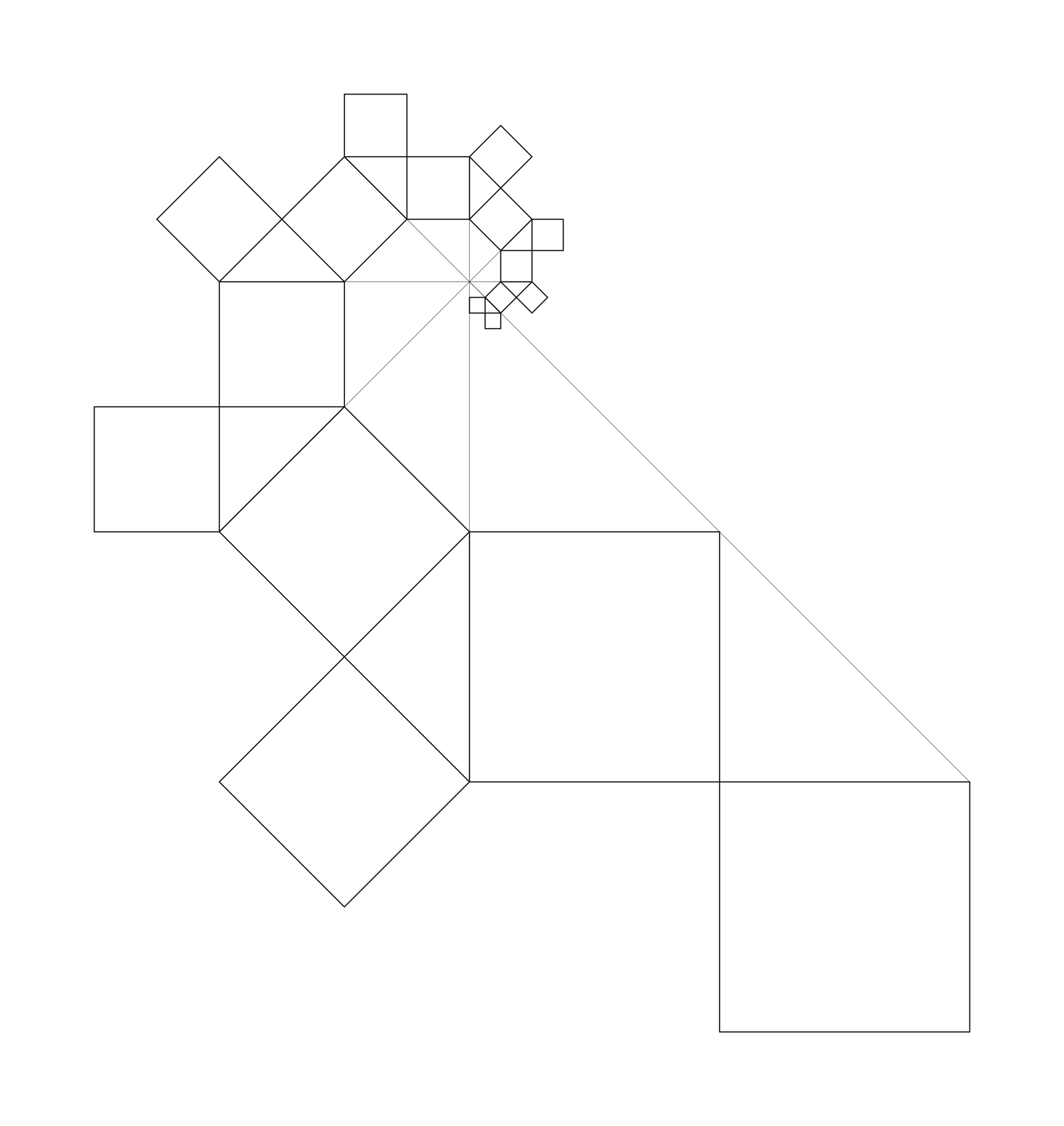}
\caption{Positive orbit with hypotenuse line construction.}
\end{minipage}
\hfill
\begin{minipage}{0.4\textwidth}
\centering
\includegraphics[width=\textwidth]{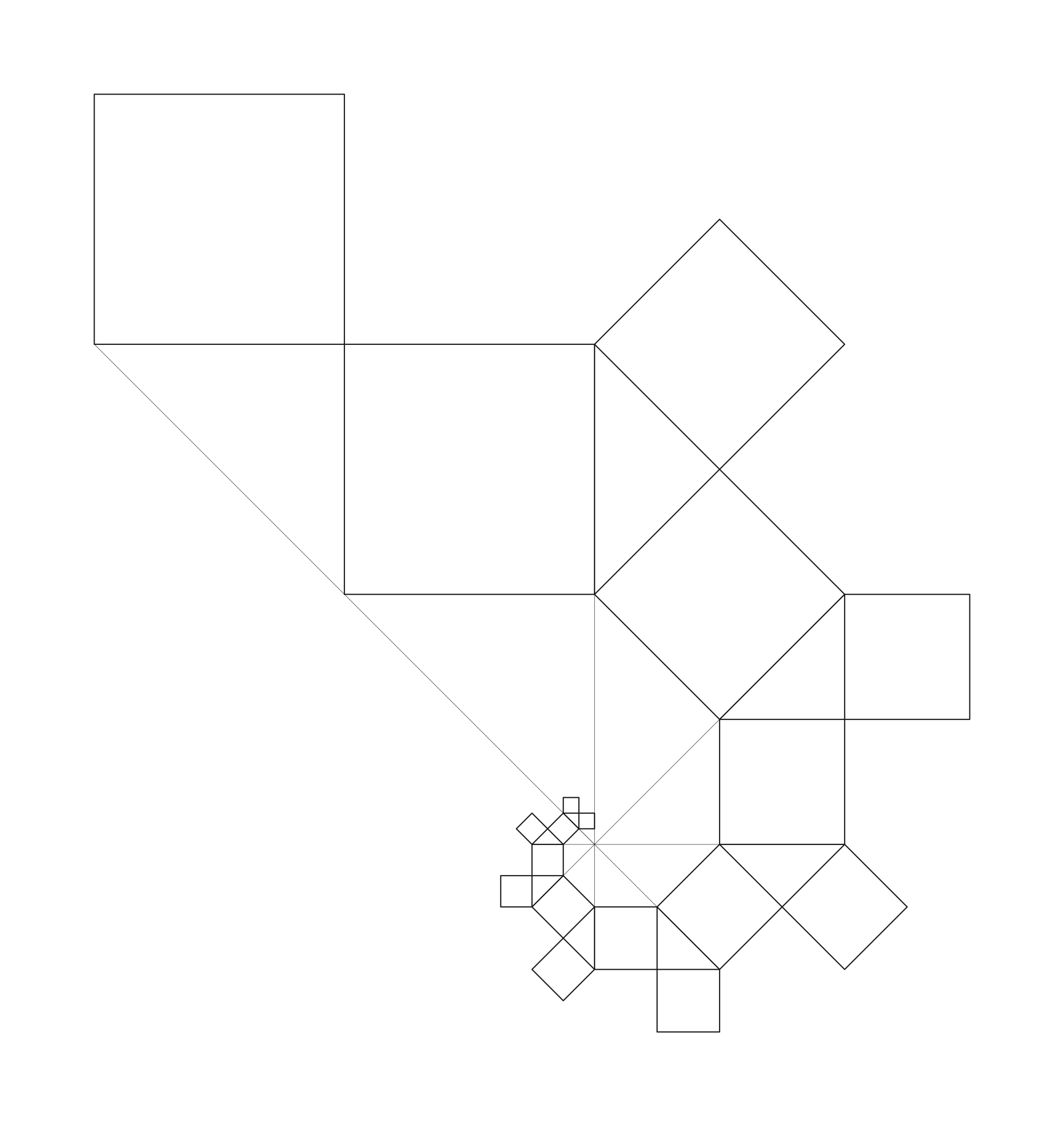}
\caption{Negative orbit with hypotenuse line construction.}
\end{minipage}
\end{figure}
The hypotenuse lines generated by the positive orbit are all concurrent at the point $N_0 = (-1,2)$, and this common intersection point naturally defines the starting point of the negative orbit. Symmetrically, all hypotenuse lines of the negative orbit pass through $P_0 = (0,0)$. Within each orbit the lines take only the four directions produced by repeated $45^\circ$ rotation (horizontal, vertical, and the two diagonals), so each orbit contributes exactly four distinct lines through its common point. Pairing each positive-orbit line with the negative-orbit line of the same direction gives four pairs of parallel lines, with perpendicular separations $\frac{1}{\sqrt{2}}$, 1, 2, and $\frac{3}{\sqrt{2}}$.
\begin{figure}[H]
\centering
\includegraphics[width=0.4\textwidth]{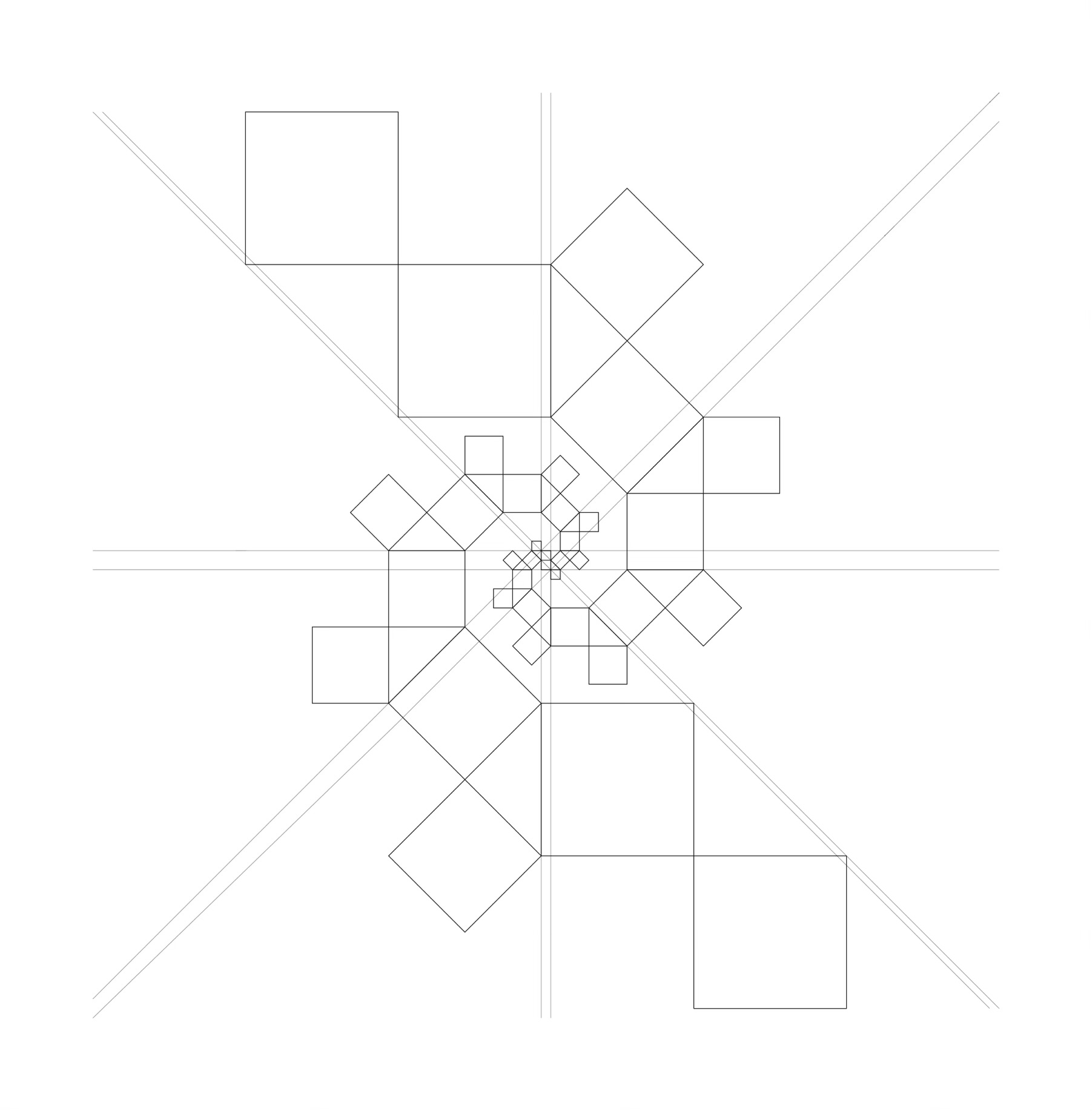}
\caption{Positive and negative orbits on $\mathbb{Z}^2$ and the four families of parallel hypotenuse lines.}
\end{figure}
To connect the geometric construction to the algebra, consider the base case at \(P_0 = (0,0)\). The paired unit squares have outer edges from \((0,0)\) to \((1,0)\) and from \((0,0)\) to \((0,1)\) that form an isosceles right triangle whose hypotenuse runs from \((0,1)\) to \((1,0)\), with direction vector \((1,-1)\). Rotating this vector \(90^\circ\) counterclockwise produces the outward perpendicular \((1,1)\). Adding it to the hypotenuse endpoints yields the new corners \((1,2)\) and \((2,1)\). The construction selects $P_1 = (2,1)$ as the next iteration point of the positive orbit.
This single step is precisely the affine transformation that will be written below using the matrix $A$ and translation vector $v$. The same local rule generates the full positive orbit on $\mathbb{Z}^2$.
Each step scales side lengths by $\sqrt{2}$ and rotates vectors counterclockwise by $\pi/4$. The construction therefore has 8-step rotational periodicity: after eight iterations the orientation repeats while all displacements scale by $16$: the positive orbit about $N_0$ and the negative orbit about $P_0$, each about the other orbit's seed point.
This iterative process generates two sequences of iteration points on $\mathbb{Z}^2$: the positive orbit $\{P_n\}$ starting at $(0,0)$ and the negative orbit $\{N_n\}$ starting at $(-1,2)$.
\subsection{Affine Recurrences}
The first nine iteration points of the positive orbit are: $(0,0)_0$, $(2,1)_1$, $(3,4)_2$, $(1,8)_3$, $(-5,10)_4$, $(-13,6)_5$, $(-17,-6)_6$, $(-9,-22)_7$, $(15,-30)_8$, \dots
The first nine iteration points of the negative orbit are: $(-1,2)_0$, $(-3,1)_1$, $(-4,-2)_2$, $(-2,-6)_3$, $(4,-8)_4$, $(12,-4)_5$, $(16,8)_6$, $(8,24)_7$, $(-16,32)_8$, \dots
Both sequences satisfy linear recurrences on $\mathbb{Z}^2$. Define the matrix
\[
A = \begin{pmatrix} 1 & -1 \\ 1 & 1 \end{pmatrix}
\]
and the vector
\[
v = (2,1).
\]
The positive orbit satisfies the affine recurrence
\[
P_{n+1} = A P_n + v, \quad \text{with } P_0 = (0,0).
\]
\begin{figure}[H]
\centering
\includegraphics[width=0.9\textwidth]{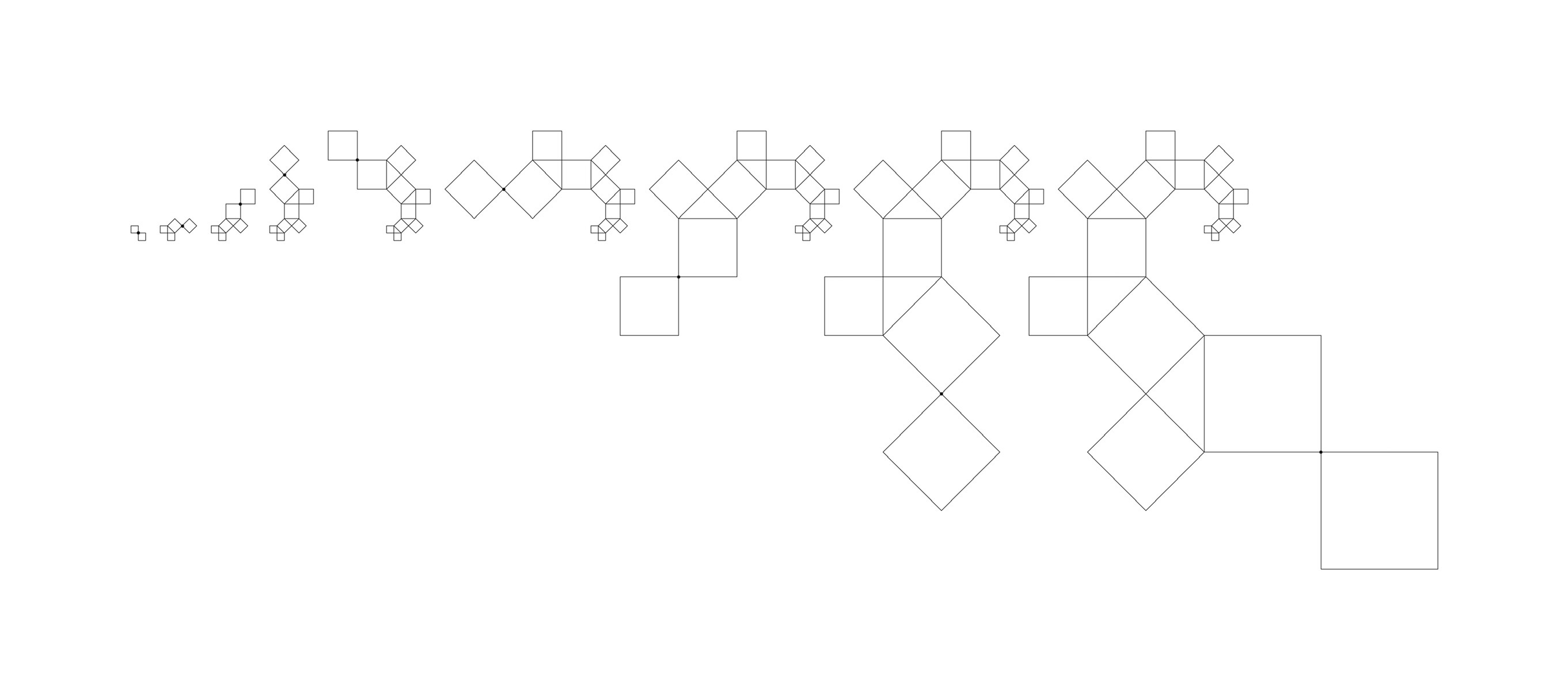}
\caption{Iterations $n = 0$ to 8 of the positive orbit using paired squares.}
\end{figure}
\begin{figure}[H]
\centering
\includegraphics[width=0.9\textwidth]{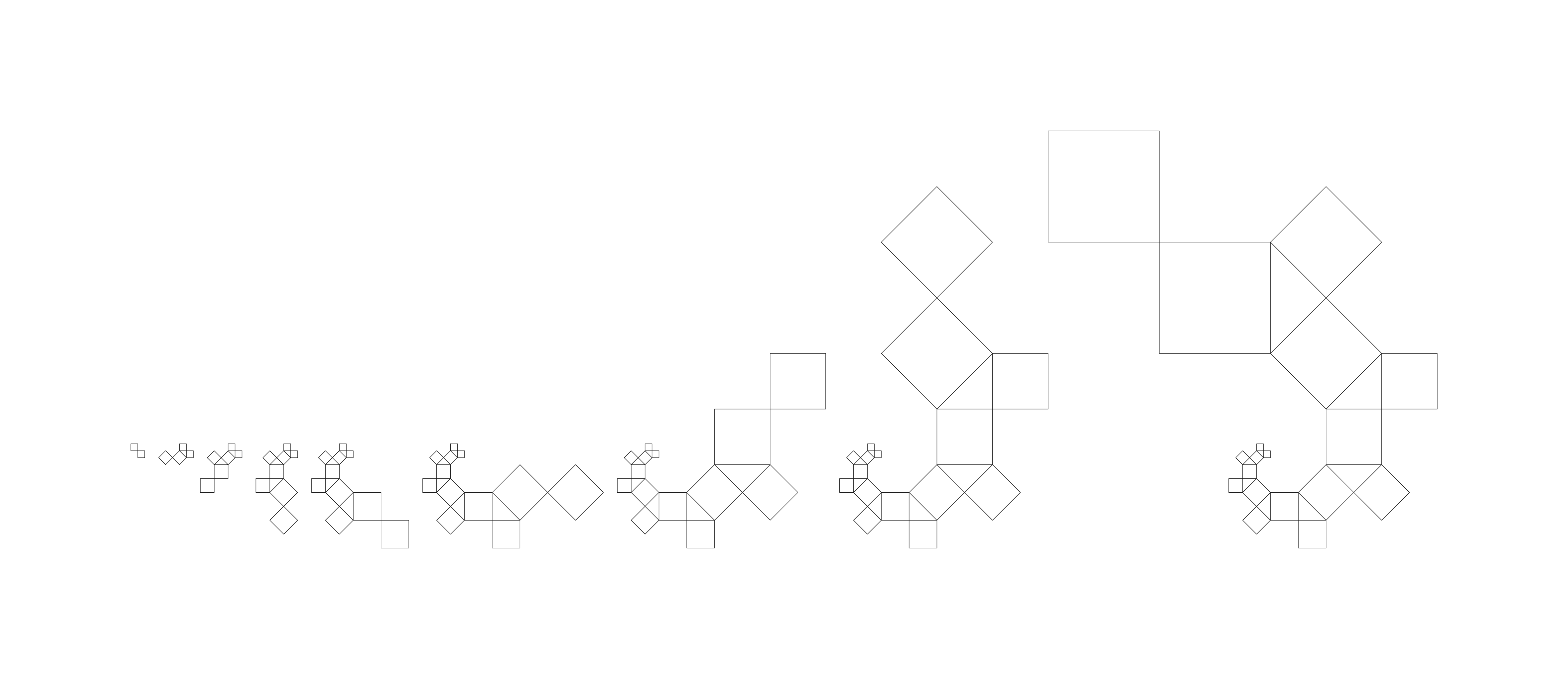}
\caption{Iterations $n = 0$ to 8 of the negative orbit using paired squares.}
\end{figure}
The negative orbit satisfies the linear recurrence
\[
N_{n+1} = A N_n, \quad \text{with } N_0 = (-1, 2).
\]
Identifying $\mathbb{R}^2$ with the complex plane $\mathbb{C}$ via the map $(x, y) \mapsto x + iy$, the matrix $A$ corresponds to multiplication by the Gaussian integer $1+i$. Under this identification the recurrences become
\[
P_{n+1} = (1 + i)P_n + (2 + i), \quad \text{with } P_0 = 0,
\]
and
\[
N_n = (-1 + 2i) \cdot (1 + i)^n, \quad \text{with } N_0 = -1 + 2i.
\]
Solving these recurrences yields the closed forms
\[
P_n = (-1 + 2i)\bigl[1 - (1 + i)^n\bigr], \quad N_n = (-1 + 2i) \cdot (1 + i)^n.
\]
Multiplication by $1 + i$ rotates vectors counterclockwise by $\pi/4$ and scales their lengths by $\sqrt{2}$. In particular, the negative-orbit points $N_n = (-1 + 2i)(1 + i)^n$ are exact integer samples of a logarithmic spiral about $P_0$, the radius scaling by $\sqrt{2}$ for each $\pi/4$ of rotation; correspondingly $P_n = N_0 - N_n$ samples a logarithmic spiral about $N_0$. This is the precise sense in which the orbits are ``spiral.''
\subsection{Unified Geometric Encoding}
The single Gaussian integer $(1+i)^n$ encodes all the geometric information of the $n$-th iteration simultaneously. Writing $(1+i)^n = a_n + b_n i$ with $a_n, b_n \in \mathbb{Z}$, the orbit points are
\[
N_n = (-1+2i)(a_n + b_n i) = (-a_n - 2b_n) + (2a_n - b_n)i,
\]
and $P_n = (-1+2i) - N_n$, recovering both lattice points directly. The squared side length of the construction at step $n$ is $|(1+i)^n|^2 = 2^n$, so each side has length $2^{n/2}$, and the corresponding square has area $2^n$. The argument $\arg(1+i)^n = n\pi/4$ gives the cumulative rotation angle, so the orientation of the paired squares at step $n$ is $n \times 45^\circ$ from the base configuration. All four quantities (the coordinates of $P_n$ and $N_n$, the side length $2^{n/2}$, the area $2^n$, and the orientation angle $n\pi/4$) are thus read off from the single complex number $(1+i)^n$.
\section{Fixed Midpoint M}
The positive and negative orbits are symmetric with respect to a fixed point in the plane. For all $n \ge 0$ the iteration points satisfy the invariant relation
\[
P_n + N_n = (-1, 2).
\]
Dividing by 2 gives the midpoint
\[
M = \bigl(-\frac{1}{2}, 1\bigr).
\]
This point is independent of $n$. Consequently, $N_n$ is the point reflection of $P_n$ through $M$.
$M$ is the only point in the construction that does not lie on the integer lattice $\mathbb{Z}^2$. All iteration points $P_n$ and $N_n$, as well as all corners of the squares, lie on $\mathbb{Z}^2$.
The points $P_0 = (0, 0)$, $M = (-\frac{1}{2}, 1)$, and $N_0 = (-1, 2)$ are collinear along the direction $(-1, 2)$, and the distances satisfy $|P_0M| = |MN_0| = \sqrt{5}/2$.
Extending the recurrence backward ($n < 0$) does not collapse the orbits to $M$. Instead, as $n \to -\infty$, $P_n \to N_0$ and $N_n \to P_0$, so the points spiral around $M$ with the common distance $d_n = |P_n - M|$ approaching $\sqrt{5}/2$. This backward behaviour is the contracting dynamics whose attractor is similar to the Twindragon fractal.
\subsection{Distance from the Fixed Midpoint}
The positive and negative orbits remain symmetric with respect to the fixed point
\[
M = \left(-\frac12, 1\right)
\]
for all $n$. Define the common distance
\[
d_n = |P_n - M| = |N_n - M|.
\]
The midpoint relation $P_n + N_n = 2M$ immediately implies
\[
P_n - M = -(N_n - M).
\]
Applying the parallelogram law then gives
\[
|P_n|^2 + |N_n|^2 = 2d_n^2 + 2|M|^2.
\]
Since $|M|^2 = \frac54$, this simplifies to
\[
|P_n|^2 + |N_n|^2 = 2d_n^2 + \frac52.
\]
Dividing by 5 and using the definition of the sequence yields
\[
a(n) = \frac25\, d_n^2 + \frac12,
\]
or equivalently
\[
d_n^2 = \frac52 \left( a(n) - \frac12 \right).
\]
Thus $a(n)$ admits a direct geometric meaning: it is an affine rescaling of the squared distance from the paired points to the symmetry center $M$.
\section{Connection to the Twindragon Fractal}
The forward iteration of the two orbits is governed by the affine maps
\[
T(z) = (1 + i)z + (2 + i), \qquad S(z) = (1 + i)z,
\]
with $P_0 = 0$ and $N_0 = -1 + 2i$. The corresponding inverse maps are
\[
T^{-1}(z) = \frac{z - (2 + i)}{1 + i}, \qquad S^{-1}(z) = \frac{z}{1 + i}.
\]
Both inverse maps are contractions: each multiplies distances by $1/\sqrt{2}$ and rotates by $-\pi/4$. The pair $\{T^{-1}, S^{-1}\}$ therefore forms an iterated function system (IFS) in the sense of Barnsley~\cite{barnsley}.
The standard Twindragon fractal is the attractor of the IFS
\[
f_1(z) = \frac{z}{1+i}, \qquad f_2(z) = 1 - \frac{z}{1+i},
\]
whose two maps are similarities of ratio $1/\sqrt{2}$, and indeed $S^{-1} = f_1$ exactly.
Our generators $S^{-1}$ and $T^{-1}$ share the common linear part $z \mapsto z/(1+i)$, so each
is a similarity of ratio $1/\sqrt{2}$ and rotation $-\pi/4$, the two differing only by the
translation
\[
T^{-1}(z) - S^{-1}(z) = -\frac{2+i}{1+i} = \frac{-3+i}{2}.
\]
An iterated function system whose maps share a single contractive linear part and differ only by a translation has a self-affine set as its attractor. Here the common linear part is a similarity and the digit set $\{0,1\}$ is a complete residue system modulo $1+i$, so this attractor is a self-similar tile of positive area~\cite{bandt, grochenig, lagariaswang}. Writing $\varepsilon_k \in \{0,1\}$ for the choice between $S^{-1}$ and $T^{-1}$ at step $k$ (with $\varepsilon_k = 1$ recording an application of $T^{-1}$), this attractor is
\[
\mathcal{A} = \left\{ -\frac{2+i}{1+i}\sum_{k=0}^{\infty}\varepsilon_k (1+i)^{-k}
   : \varepsilon_k \in \{0,1\} \right\} = (-2-i)\,\mathcal{T},
\]
where
\[
\mathcal{T} = \left\{ \sum_{k=1}^{\infty} \delta_k (1+i)^{-k} : \delta_k \in \{0,1\} \right\}
\]
is the base-$(1+i)$, digit-$\{0,1\}$ number-system tile \cite{knuth, gilbert}. (The base $1+i$ used here is the mirror-image convention of Knuth's base $i-1$; the two tiles are reflections of one another, hence the same fractal up to a reflection.) This tile is
the Twindragon: scaling it by $1+i$ yields two adjacent copies of itself, the defining
rep-tile property of the Twindragon. Hence $\mathcal{A}$ is the image of the Twindragon under
the similarity $z \mapsto (-2-i)z$, and is in particular similar to it
\cite[Ch.~3]{barnsley}. The Twindragon is the smallest of the self-affine lattice tiles arising from expanding Gaussian-integer bases, the admissible bases being exactly the canonical number systems classified by K\'atai and Szab\'o~\cite{katai}.
The fixed point of $S^{-1}$ is $P_0 = 0$ and the fixed point of $T^{-1}$ is $N_0 = -1 + 2i$. The midpoint $M = (-\frac12, 1)$ established in Section~3 is the center of this attractor. It is the unique point fixed by the reflection exchanging the two orbits, and it satisfies the IFS centroid identity
\[
M = \tfrac12\bigl(S^{-1}(M) + T^{-1}(M)\bigr).
\]
When the recurrence is extended to negative indices, the orbits do not collapse to $M$. Instead, as $n \to -\infty$, $P_n \to N_0$ and $N_n \to P_0$, with the common distance $d_n = |P_n - M|$ approaching $\sqrt{5}/2$ rather than zero. The backward orbits trace deterministic paths through this similar copy of the Twindragon; the whole tile is recovered only as the closure of the points obtained from all $\{0,1\}$-sequences of $S^{-1}$ and $T^{-1}$, not from any single itinerary. The 8-step rotational symmetry follows from $(1+i)^8 = 16$: after eight backward iterations, deviations from the fixed points scale by a factor of $1/16$.
\begin{figure}[H]
\centering
\includegraphics[width=0.9\textwidth]{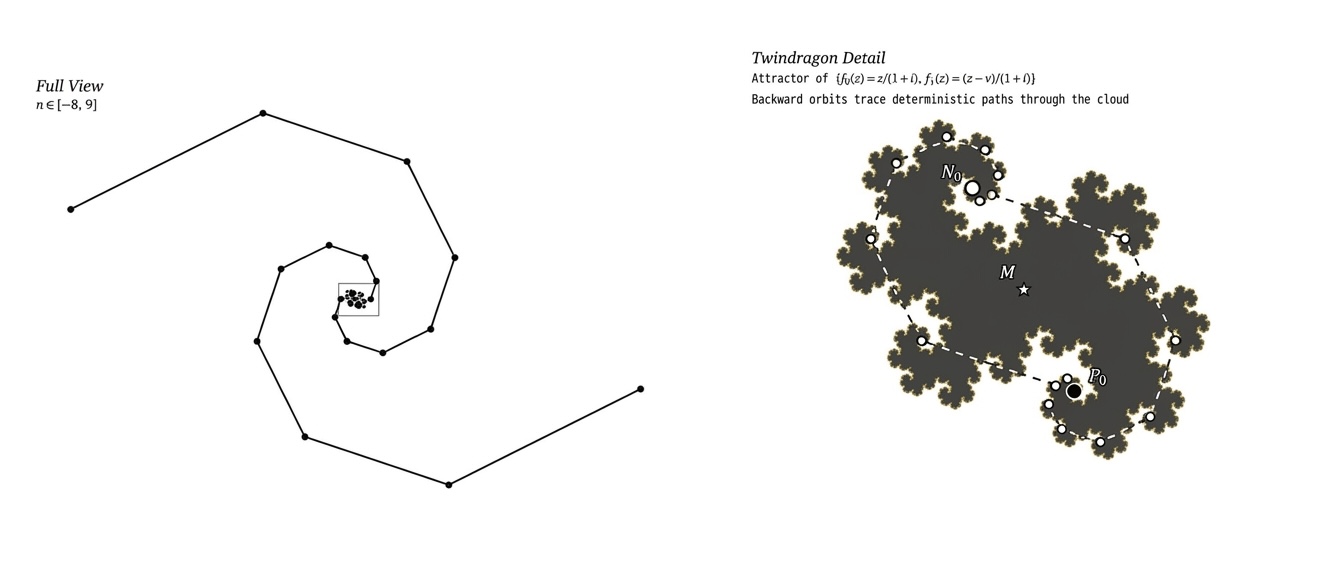}
\caption{Bidirectional view: forward orbits ($n \ge 0$) expand outward on the integer lattice $\mathbb{Z}^2$, while backward orbits ($n < 0$) generate points within a similar image of the Twindragon attractor.}
\end{figure}
\begin{figure}[H]
\centering
\includegraphics[width=0.8\textwidth]{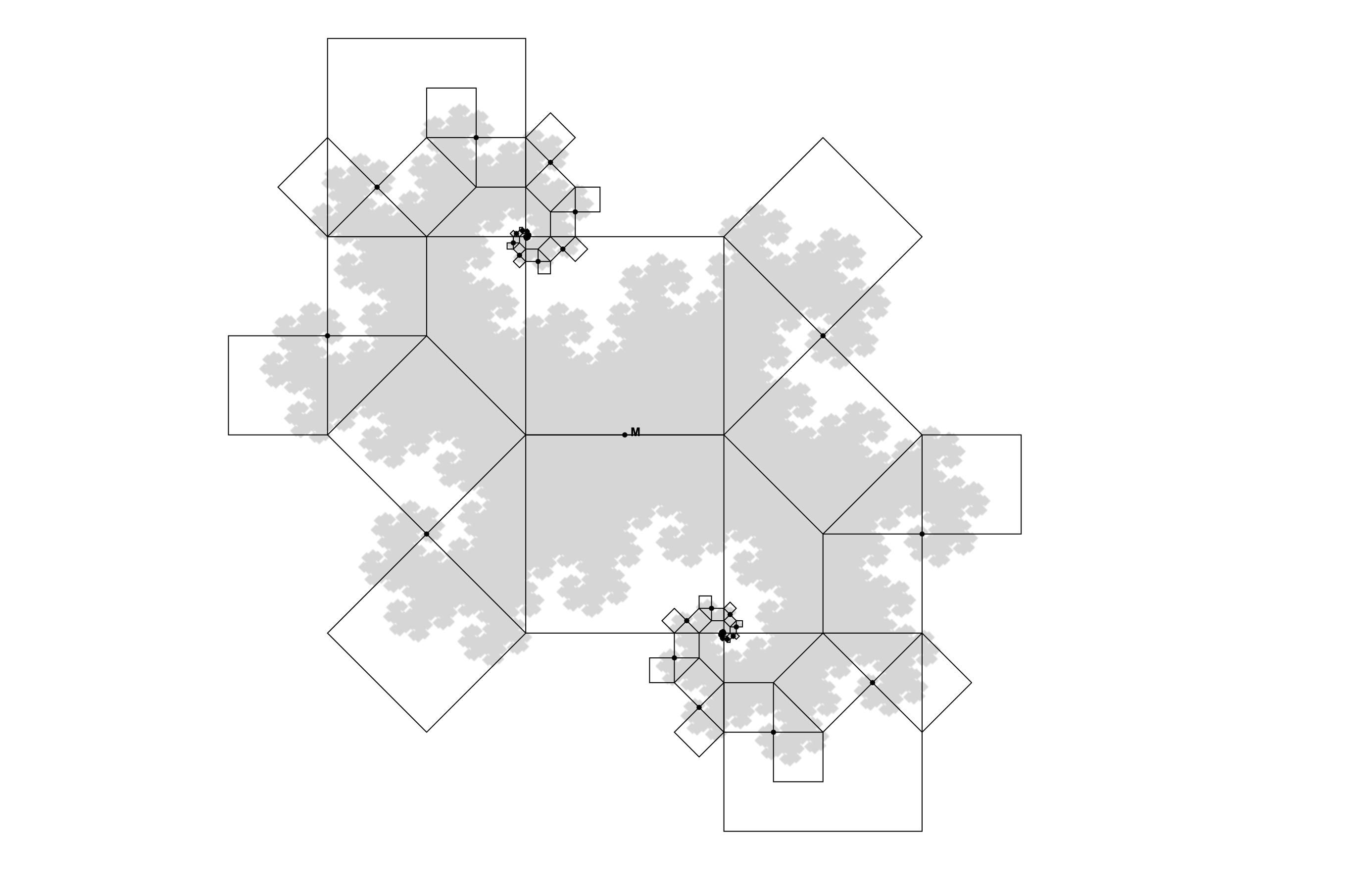}
\caption{Backward iterations of the paired squares. As $n$ decreases, the configurations contract and spiral into the limit points $N_0$ and $P_0$ (the fixed points of $T^{-1}$ and $S^{-1}$), accumulating on the shaded attractor of $\{T^{-1}, S^{-1}\}$, a similar image of the Twindragon. The point $M = \bigl(-\frac{1}{2}, 1\bigr)$ is its center of symmetry.}
\end{figure}
\section{Main Invariants}
\begin{theorem}[Sum Invariant]
For all $n \ge 0$,
\[
P_n + N_n = (-1, 2).
\]
\end{theorem}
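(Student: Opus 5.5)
I will prove the sum invariant by induction on $n$, using only the two affine recurrences of Section~2, namely $P_{n+1} = A P_n + v$ with $P_0=(0,0)$ and $N_{n+1} = A N_n$ with $N_0=(-1,2)$. Set $c = (-1,2)$ and let $Q_n = P_n + N_n$. Adding the two recurrences gives
\[
Q_{n+1} = A(P_n + N_n) + v = A Q_n + v .
\]
So $Q_n$ is itself an orbit of the positive-orbit map $T$, started at $Q_0 = P_0 + N_0 = c$.

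The key step is to check that $c$ is a fixed point of $z\mapsto Az+v$. Directly,
\[
Ac = \begin{pmatrix} 1 & -1 \\ 1 & 1\end{pmatrix}\begin{pmatrix} -1 \\ 2\end{pmatrix} = \begin{pmatrix} -3 \\ 1\end{pmatrix},
\]
and $Ac + v = (-3,1)+(2,1) = (-1,2) = c$. In complex form this is the identity $(1+i)(-1+2i) + (2+i) = (-3+i)+(2+i) = -1+2i$, which says that $N_0$ is the fixed point of $T$. This matches the earlier remark that $N_0$ is the fixed point of $T^{-1}$. Since $Q_0 = c$ and $c$ is fixed, induction gives $Q_n = c$ for all $n\ge 0$, which is the statement.

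As a cross-check, I will also read the result off the closed forms $P_n = (-1+2i)[1-(1+i)^n]$ and $N_n = (-1+2i)(1+i)^n$. Their sum is $-1+2i$ identically. The inductive route is the primary one, because it rests only on the recurrences rather than on solving them.

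There is no real obstacle. The only point needing care is the correspondence between the geometric construction and the recurrences. I take the affine recurrences as established in Section~2, so the invariant follows purely algebraically. Because $A$ is invertible, the same argument extends verbatim to $n<0$ under the inverse maps.
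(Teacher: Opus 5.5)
Your proof is correct. It differs from the paper's only in which fact carries the weight. The paper substitutes the closed forms $P_n = (-1+2i)[1-(1+i)^n]$ and $N_n = (-1+2i)(1+i)^n$ and notes that the $(1+i)^n$ terms cancel, which is exactly your cross-check.

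Your primary argument never solves the recurrences. Writing $Q_{n+1} = T(Q_n)$ reduces everything to the single check $T(-1+2i) = -1+2i$. This is more self-contained, since the paper asserts the closed forms after ``solving these recurrences'' without showing the derivation. It also explains \emph{why} the invariant holds. Since $Q_n = T^n(P_0+N_0)$, the sum is constant precisely when the seeds add up to the fixed point of $T$. So with $P_0 = 0$ the invariant is equivalent to the choice $N_0 = \mathrm{Fix}(T)$, the fact the paper records in the Twindragon section as $N_0$ being the fixed point of $T^{-1}$.

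The two routes are close relatives. Solving $P_{n+1} = (1+i)P_n + (2+i)$ amounts to writing $P_n - c = (1+i)^n(P_0 - c)$ with $c = -1+2i$ the fixed point, so your check is already inside the paper's closed form. The paper's version has the advantage of delivering the closed forms it needs anyway for the integrality of $a(n)$.

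Your remark about $n<0$ is also right; the paper tacitly relies on it when it speaks of the common distance $d_n$ for negative $n$. The closed-form cancellation extends there equally well.
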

\begin{proof}
Substituting the closed forms gives
\[
P_n + N_n = (-1 + 2i)[1 - (1 + i)^n] + (-1 + 2i)(1 + i)^n.
\]
The terms involving $(1 + i)^n$ cancel, leaving $-1 + 2i$.
\end{proof}
\begin{theorem}[Integrality of the Norm Sum]
Define
\[
a(n) = \frac{|P_n|^2 + |N_n|^2}{5}, \qquad n \ge 0,
\]
where $|U|^2 = x^2 + y^2$ denotes the squared Euclidean norm. Then $a(n)$ is an integer for every $n$, given in closed form by
\[
a(n) = 2^{n+1} + 1 - 2\operatorname{Re}\!\bigl((1 + i)^n\bigr).
\]
\end{theorem}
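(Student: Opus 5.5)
We plan to compute directly from the closed forms $P_n = (-1+2i)\bigl[1-(1+i)^n\bigr]$ and $N_n = (-1+2i)(1+i)^n$ derived in Section~2. Write $w = (1+i)^n$. Since $|{-1+2i}|^2 = 5$, multiplicativity of the modulus gives
\[
|N_n|^2 = 5\,|w|^2 = 5\cdot 2^n, \qquad |P_n|^2 = 5\,|1-w|^2 .
\]
So the factor $5$ in the definition of $a(n)$ comes out of both terms at once, and this is the whole reason for dividing by $5$.

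Next, we expand $|1-w|^2 = (1-w)(1-\bar w) = 1 - 2\operatorname{Re}(w) + |w|^2 = 1 + 2^n - 2\operatorname{Re}\bigl((1+i)^n\bigr)$. Adding the two norms and dividing by $5$ gives
\[
a(n) = 2^n + 1 + 2^n - 2\operatorname{Re}\bigl((1+i)^n\bigr) = 2^{n+1} + 1 - 2\operatorname{Re}\bigl((1+i)^n\bigr),
\]
which is the claimed closed form.

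Integrality then follows because $(1+i)^n$ is a Gaussian integer: by the binomial theorem, or as in the ``Unified Geometric Encoding'' subsection, $(1+i)^n = a_n + b_n i$ with $a_n, b_n \in \mathbb{Z}$. Hence $\operatorname{Re}\bigl((1+i)^n\bigr) = a_n \in \mathbb{Z}$, and every term in the closed form is an integer for $n \ge 0$.

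There is no real obstacle here. The only point needing care is the identification of $|U|^2 = x^2+y^2$ with the complex modulus squared under $(x,y)\mapsto x+iy$, which the paper has already fixed. As an optional sanity check, we could verify $n=1$: $P_1=(2,1)$ and $N_1=(-3,1)$ give $(5+10)/5 = 3$, and the formula gives $4+1-2 = 3$.
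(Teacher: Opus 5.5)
Your proposal is correct and follows the paper's proof. You use the closed forms to factor $|{-1+2i}|^2 = 5$ out of both norms, expand $|1-(1+i)^n|^2 = 1 - 2\operatorname{Re}((1+i)^n) + 2^n$, and get integrality from $(1+i)^n$ being a Gaussian integer; you just write out the expansion the paper compresses into ``expanding and simplifying.''
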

\begin{proof}
Using the closed forms
\[
P_n = (-1 + 2i)\bigl(1 - (1 + i)^n\bigr),
\]
and
\[
N_n = (-1 + 2i)(1 + i)^n,
\]
together with
\[
|-1 + 2i|^2 = 5,
\]
we obtain
\[
|P_n|^2 = 5\, |1 - (1 + i)^n|^2,
\]
and
\[
|N_n|^2 = 5\, |(1 + i)^n|^2.
\]
Expanding and simplifying yields
\[
a(n) = 2^{n+1} + 1 - 2\operatorname{Re}\!\bigl((1 + i)^n\bigr).
\]
Since $(1 + i)^n$ is a Gaussian integer for every $n$, its real part is an integer. Therefore $a(n)$ is always an integer. This sequence appears as A396151 in the OEIS~\cite{oeis}.
\end{proof}
\begin{remark}
The first terms of $a(n)$ are
\[
a(n) = 1,\ 3,\ 9,\ 21,\ 41,\ 73,\ 129,\ 241,\ 481,\ 993,\ 2049,\ 4161,\ \dots
\qquad (n = 0, 1, 2, \dots).
\]
Beyond the closed form above, $a(n)$ satisfies the integer linear recurrence
\[
a(n) = 5\,a(n-1) - 10\,a(n-2) + 10\,a(n-3) - 4\,a(n-4), \qquad n \ge 4,
\]
whose characteristic polynomial factors as $(x-1)(x-2)(x^2 - 2x + 2)$, with roots $1$, $2$, and $1 \pm i$. These four roots account for the three terms $1$, $2^{n+1}$, and $-2\operatorname{Re}((1+i)^n)$ of the closed form.
\end{remark}
\begin{theorem}[Distance Interpretation]
Let
\[
d_n = |P_n - M| = |N_n - M|.
\]
Then
\[
a(n) = \frac25\, d_n^2 + \frac12.
\]
Equivalently,
\[
d_n^2 = \frac52\left(a(n) - \frac12\right).
\]
\end{theorem}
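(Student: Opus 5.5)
The plan is to derive the identity directly from the Sum Invariant theorem and the definition of $a(n)$ in the Integrality theorem, and then to confirm it independently with the closed forms.

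First, I would record that the two distances in the definition of $d_n$ agree. By the Sum Invariant, $P_n + N_n = (-1,2) = 2M$, so $P_n - M = -(N_n - M)$ and hence $|P_n - M| = |N_n - M|$; thus $d_n$ is well defined. Next, I would expand the squared norms about $M$:
\[
|P_n|^2 = |(P_n - M) + M|^2 = d_n^2 + 2\langle P_n - M, M\rangle + |M|^2,
\]
\[
|N_n|^2 = |(N_n - M) + M|^2 = d_n^2 - 2\langle P_n - M, M\rangle + |M|^2.
\]
Adding these, the cross terms cancel because $N_n - M = -(P_n - M)$. This gives the parallelogram-law identity
\[
|P_n|^2 + |N_n|^2 = 2d_n^2 + 2|M|^2 = 2d_n^2 + \tfrac52,
\]
using $|M|^2 = \tfrac14 + 1 = \tfrac54$. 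Dividing by $5$ and invoking the definition of $a(n)$ yields $a(n) = \tfrac25 d_n^2 + \tfrac12$. Solving this linear relation for $d_n^2$ gives the equivalent form $d_n^2 = \tfrac52\bigl(a(n) - \tfrac12\bigr)$.

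As a consistency check, I would also compute $d_n^2$ from the closed forms. Write $w = (1+i)^n$. Then
\[
P_n - M = (-1+2i)\bigl(\tfrac12 - w\bigr),
\]
so
\[
d_n^2 = 5\bigl|\tfrac12 - w\bigr|^2 = 5\bigl(\tfrac14 - \operatorname{Re} w + 2^n\bigr).
\]
Hence
\[
\tfrac25 d_n^2 + \tfrac12 = 2^{n+1} + 1 - 2\operatorname{Re} w,
\]
which matches the closed form for $a(n)$ in the Integrality theorem. At $n = 0$ this gives $d_0^2 = \tfrac54$ and $a(0) = 1$, in agreement with the listed values.

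There is no real obstacle here. The only point needing care is the cancellation of the cross terms, and that rests entirely on the Sum Invariant.
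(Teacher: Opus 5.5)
Your proposal is correct and follows essentially the same argument as the paper: the Sum Invariant gives $P_n - M = -(N_n - M)$, the parallelogram law then gives $|P_n|^2 + |N_n|^2 = 2d_n^2 + \tfrac52$, and dividing by $5$ yields the identity. Your closed-form check via $P_n - M = (-1+2i)\bigl(\tfrac12 - (1+i)^n\bigr)$ is also correct; it is an extra confirmation that the paper does not include.
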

\begin{proof}
This follows directly from the midpoint symmetry and the identity
\[
|P_n|^2 + |N_n|^2 = 2d_n^2 + \frac52.
\]
\end{proof}
\section{Conclusion}
This note introduces a simple geometric construction on the integer lattice $\mathbb{Z}^2$ using paired unit squares. The process generates two interlocking affine spiral orbits that remain on lattice points, satisfy clean recurrences driven by the Gaussian integer $1 + i$, and are symmetric through the fixed midpoint $M = (-\frac{1}{2}, 1)$. It also produces the new integer sequence $a(n)$ (OEIS A396151). Extending the iteration backward reveals that the same maps form an iterated function system whose attractor is similar to the Twindragon fractal. The pairing construction and the sequence $a(n)$ appear to be new, whereas the backward dynamics re-derives, from an elementary starting point, the classical self-affine tile of the Gaussian-integer base $1 + i$. The construction therefore offers a visual lattice viewpoint on the dynamics of this well known fractal, and the fact that such elementary geometry gives rise to questions touching algebraic number theory suggests it is richer than its origins on graph paper might imply.

\bigskip
\section*{Appendix A: Coordinate Tables for Paired Squares}
To verify the corner-sum identity, we list all eight corner coordinates of the paired unit squares for each iteration from $n = 0$ to $n = 8$, together with their sums and the average (total sum divided by 8) for both orbits. Since the two squares are point reflections of each other through their shared corner, that average is exactly the shared corner $P_n$ (or $N_n$).
\subsection*{Positive Orbit Paired Squares}
\footnotesize
\begin{longtable}{@{}p{2.2cm}p{8.0cm}p{3.0cm}p{1.6cm}@{}}
\toprule
\textbf{Iteration point} & \textbf{Pair of squares (corners)} & \textbf{Coordinate sums} & \textbf{Average (÷8)} \\
\midrule
\endfirsthead
\toprule
\textbf{Iteration point} & \textbf{Pair of squares (corners)} & \textbf{Coordinate sums} & \textbf{Average (÷8)} \\
\midrule
\endhead
\bottomrule
\endfoot
\(P_0 = (0,0)\) & \makecell[tl]{a: \((0,0),(-1,0),(0,1),(-1,1)\) \\ b: \((0,0),(1,0),(0,-1),(1,-1)\)} & \makecell[tl]{a: \((-2,2)\) \\ b: \((2,-2)\)} & \((0,0)\) \\[0.3em]
\(P_1 = (2,1)\) & \makecell[tl]{c: \((2,1),(1,0),(0,1),(1,2)\) \\ d: \((2,1),(3,0),(4,1),(3,2)\)} & \makecell[tl]{c: \((4,4)\) \\ d: \((12,4)\)} & \((2,1)\) \\[0.3em]
\(P_2 = (3,4)\) & \makecell[tl]{e: \((3,4),(3,2),(1,2),(1,4)\) \\ f: \((3,4),(5,4),(3,6),(5,6)\)} & \makecell[tl]{e: \((8,12)\) \\ f: \((16,20)\)} & \((3,4)\) \\[0.3em]
\(P_3 = (1,8)\) & \makecell[tl]{g: \((1,8),(1,4),(-1,6),(3,6)\) \\ h: \((1,8),(-1,10),(3,10),(1,12)\)} & \makecell[tl]{g: \((4,24)\) \\ h: \((4,40)\)} & \((1,8)\) \\[0.3em]
\(P_4 = (-5,10)\) & \makecell[tl]{i: \((-5,10),(-1,6),(-1,10),(-5,6)\) \\ j: \((-5,10),(-9,10),(-9,14),(-5,14)\)} & \makecell[tl]{i: \((-12,32)\) \\ j: \((-28,48)\)} & \((-5,10)\) \\[0.3em]
\(P_5 = (-13,6)\) & \makecell[tl]{k: \((-13,6),(-9,2),(-5,6),(-9,10)\) \\ l: \((-13,6),(-17,2),(-21,6),(-17,10)\)} & \makecell[tl]{k: \((-36,24)\) \\ l: \((-68,24)\)} & \((-13,6)\) \\[0.3em]
\(P_6 = (-17,-6)\) & \makecell[tl]{m: \((-17,-6),(-17,2),(-9,2),(-9,-6)\) \\ n: \((-17,-6),(-17,-14),(-25,-14),(-25,-6)\)} & \makecell[tl]{m: \((-52,-8)\) \\ n: \((-84,-40)\)} & \((-17,-6)\) \\[0.3em]
\(P_7 = (-9,-22)\) & \makecell[tl]{o: \((-9,-22),(-9,-6),(-1,-14),(-17,-14)\) \\ p: \((-9,-22),(-1,-30),(-9,-38),(-17,-30)\)} & \makecell[tl]{o: \((-36,-56)\) \\ p: \((-36,-120)\)} & \((-9,-22)\) \\[0.3em]
\(P_8 = (15,-30)\) & \makecell[tl]{q: \((15,-30),(15,-14),(-1,-14),(-1,-30)\) \\ r: \((15,-30),(31,-30),(31,-46),(15,-46)\)} & \makecell[tl]{q: \((28,-88)\) \\ r: \((92,-152)\)} & \((15,-30)\) \\
\end{longtable}
\subsection*{Negative Orbit Paired Squares}
\begin{longtable}{@{}p{2.2cm}p{8.0cm}p{3.0cm}p{1.6cm}@{}}
\toprule
\textbf{Iteration point} & \textbf{Pair of squares (corners)} & \textbf{Coordinate sums} & \textbf{Average (÷8)} \\
\midrule
\endfirsthead
\toprule
\textbf{Iteration point} & \textbf{Pair of squares (corners)} & \textbf{Coordinate sums} & \textbf{Average (÷8)} \\
\midrule
\endhead
\bottomrule
\endfoot
\(N_0 = (-1,2)\) & \makecell[tl]{-a: \((-1,2),(0,2),(-1,1),(0,1)\) \\ -b: \((-1,2),(-2,2),(-1,3),(-2,3)\)} & \makecell[tl]{-a: \((-2,6)\) \\ -b: \((-6,10)\)} & \((-1,2)\) \\[0.3em]
\(N_1 = (-3,1)\) & \makecell[tl]{-c: \((-3,1),(-2,2),(-1,1),(-2,0)\) \\ -d: \((-3,1),(-4,2),(-5,1),(-4,0)\)} & \makecell[tl]{-c: \((-8,4)\) \\ -d: \((-16,4)\)} & \((-3,1)\) \\[0.3em]
\(N_2 = (-4,-2)\) & \makecell[tl]{-e: \((-4,-2),(-4,0),(-2,0),(-2,-2)\) \\ -f: \((-4,-2),(-6,-2),(-4,-4),(-6,-4)\)} & \makecell[tl]{-e: \((-12,-4)\) \\ -f: \((-20,-12)\)} & \((-4,-2)\) \\[0.3em]
\(N_3 = (-2,-6)\) & \makecell[tl]{-g: \((-2,-6),(-2,-2),(0,-4),(-4,-4)\) \\ -h: \((-2,-6),(0,-8),(-4,-8),(-2,-10)\)} & \makecell[tl]{-g: \((-8,-16)\) \\ -h: \((-8,-32)\)} & \((-2,-6)\) \\[0.3em]
\(N_4 = (4,-8)\) & \makecell[tl]{-i: \((4,-8),(0,-4),(0,-8),(4,-4)\) \\ -j: \((4,-8),(8,-8),(8,-12),(4,-12)\)} & \makecell[tl]{-i: \((8,-24)\) \\ -j: \((24,-40)\)} & \((4,-8)\) \\[0.3em]
\(N_5 = (12,-4)\) & \makecell[tl]{-k: \((12,-4),(6,0),(4,-4),(6,-8)\) \\ -l: \((12,-4),(16,0),(20,-4),(16,-8)\)} & \makecell[tl]{-k: \((32,-16)\) \\ -l: \((64,-16)\)} & \((12,-4)\) \\[0.3em]
\(N_6 = (16,8)\) & \makecell[tl]{-m: \((16,8),(16,0),(8,0),(8,8)\) \\ -n: \((16,8),(16,16),(24,16),(24,8)\)} & \makecell[tl]{-m: \((48,16)\) \\ -n: \((80,48)\)} & \((16,8)\) \\[0.3em]
\(N_7 = (8,24)\) & \makecell[tl]{-o: \((8,24),(8,8),(0,16),(16,16)\) \\ -p: \((8,24),(0,32),(8,40),(16,32)\)} & \makecell[tl]{-o: \((32,64)\) \\ -p: \((32,128)\)} & \((8,24)\) \\[0.3em]
\(N_8 = (-16,32)\) & \makecell[tl]{-q: \((-16,32),(-16,16),(0,16),(0,32)\) \\ -r: \((-16,32),(-32,32),(-32,48),(-16,48)\)} & \makecell[tl]{-q: \((-32,96)\) \\ -r: \((-96,160)\)} & \((-16,32)\) \\
\end{longtable}
\end{document}